\newtheorem{theorem}{Theorem}
\theoremstyle{definition}
\newtheorem{definition}{Definition}
\title{Quasiperfect graph}
\author{Veronica Phan*}
\thanks{*Ho Chi Minh City; email: \url{kyubivulpes@gmail.com}}
\begin{document}
\maketitle
\begin{abstract}
A perfect graph is a graph which every induced subgraph has clique number equal to chromatic number. In this paper, I will introduce a new family of graphs, the quasiperfect graphs which generalizes the perfect graphs.
\end{abstract}
\section{introduction}
All graphs in this paper are finite graphs. Let a graph $G=(V;E)$. $\omega(G),\alpha(G),\chi(G)$ be the clique number, independent number, chromatic number of $G$, respectively, $\overline{G}$ be the complement of $G$, $G[S]$ be the subgraph induced in $G$ by $S$, $K_n$ be the complete graph of order $n$, $K_0$ be the trivial graph.

A perfect graph is a graph which every induced subgraphs has clique number equal to chromatic number \cite{perfect1,perfect2}. They are important objects in graph theory. In this paper, I will introduce a new family of graphs, the quasiperfect graphs which generalizes the perfect graph and show the quasiperfect graphs also share some properties with the perfect graphs. 
\begin{definition}
\label{qp}
The definition of quasiperfect graph is inductive. Let $K_0$ be quasiperfect. Let a graph $G=(V,E)$, assume we define a graph is quasiperfect or not for all graph have less vertices than $G$. $G$ is quasiperfect if and only if:

$-$ There exist an independent set $PI$ such that it intersects all maximum cliques, each vertex of $PI$ contained in a maximum clique and $G[V-PI]$ is quasiperfect.

$-$ There exist a clique $PK$ such that it intersects all maximum independent sets, each vertex of $PK$ contained in a maximum independent set and $G[V-PK]$ is quasiperfect..
\end{definition}
We call the set $PI,PK$ as above prime independent set and prime clique respectively.
\section{The properties of quasiperfect graphs}
We want the quasiperfect graphs have the most basic property: clique number equals chromatic number.
\begin{theorem}
\label{basic}
A quasiperfect graph $G=(V,E)$ has clique number equals chromatic number.
\end{theorem}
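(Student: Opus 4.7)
The plan is to prove the equality $\chi(G) = \omega(G)$ by induction on $|V(G)|$. The base case $G = K_0$ gives $\chi = \omega = 0$ trivially. For the inductive step, I will assume the statement holds for every quasiperfect graph on fewer vertices and use only the first clause of Definition~\ref{qp} (the existence of a prime independent set $PI$); the second clause could be used symmetrically but is not needed here.

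Fix a prime independent set $PI$ for $G$. Since $G[V-PI]$ is quasiperfect with strictly fewer vertices, the inductive hypothesis gives $\chi(G[V-PI]) = \omega(G[V-PI])$. The first key step is to show $\omega(G[V-PI]) \le \omega(G) - 1$: any clique $K$ of $G[V-PI]$ is a clique of $G$ disjoint from $PI$, so if $|K| = \omega(G)$ then $K$ would be a maximum clique of $G$ missed by $PI$, contradicting the defining property that $PI$ intersects every maximum clique. Combining the two, we get $\chi(G[V-PI]) \le \omega(G) - 1$.

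The second key step is to extend an optimal coloring of $G[V-PI]$ to one of $G$: take any proper coloring of $G[V-PI]$ using at most $\omega(G)-1$ colors and assign a single new color to every vertex of $PI$, which is legitimate because $PI$ is independent. This yields $\chi(G) \le \omega(G)$. Since $\chi(G) \ge \omega(G)$ holds for any graph, equality follows.

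I do not anticipate a serious obstacle. The only subtlety is the inequality $\omega(G[V-PI]) \le \omega(G)-1$, and this uses exactly the hypothesis that $PI$ meets every maximum clique; the complementary hypothesis that each vertex of $PI$ lies in a maximum clique (which would give equality rather than inequality) is not needed for this theorem. The proof is essentially a one-step peeling argument tracking how removing $PI$ drops the clique number by at least one while needing only one extra color.
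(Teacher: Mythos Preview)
Your proof is correct and follows essentially the same induction as the paper: pick a prime independent set $PI$, apply the inductive hypothesis to $G[V-PI]$, and extend an optimal coloring with one fresh color on $PI$. The only cosmetic difference is that the paper asserts the equality $\omega(G[V-PI])=\omega(G)-1$ while you prove and use only the inequality $\omega(G[V-PI])\le\omega(G)-1$ together with the trivial bound $\chi\ge\omega$; your parenthetical remark that the reverse inequality comes from the ``each vertex of $PI$ lies in a maximum clique'' clause is slightly off (it actually follows just from $PI$ being independent, so a maximum clique of $G$ loses at most one vertex), but this aside is explicitly not part of your argument and does not affect its validity.
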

\begin{proof}
We'll prove by induction.

It's trivial for $G=K_0$. Assume the statement is true for all induced quasiperfect subgraph of $G$. We need to prove $G$ has clique number equals chromatic number.

Let $PI$ be a prime independent set of $G$. Then we have $G[V-PI]$ is quasiperfect, so by induction hypothesis, $\omega(G[V-I])=\chi(G[V-PI])$. We have $PI$ intersects all maximum cliques, and for each cliques, there as most $1$ vertex in $PI$ because $PI$ is independent, so $\omega(G)=\omega(G[V-PI])+1$. We have a $\omega(G)$-coloring of $G$ by taking a $\omega(G[V-PI])$-coloring of $G[V-PI]$, and color all vertices in $PI$ with a new color. So $G$ clique number equals chromatic number.
\end{proof}
This theorem is the analog of \emph{Weakly perfect graph theorem}:
\begin{theorem}
\label{qpgt}
The complement of a quasiperfect graph $G=(V,E)$ is also quasiperfect.
\end{theorem}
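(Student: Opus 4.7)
The plan is induction on $|V(G)|$, exploiting the symmetry baked into Definition \ref{qp}. The base case $G = K_0$ is immediate since $\overline{K_0} = K_0$. For the inductive step, I would assume every quasiperfect graph on fewer than $|V|$ vertices has quasiperfect complement, and then verify the two bullet points of Definition \ref{qp} for $\overline{G}$.

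The key observation is the bijection between the relevant objects in $G$ and $\overline{G}$: a set is independent in $G$ iff it is a clique in $\overline{G}$, and maximum cliques in $G$ correspond exactly to maximum independent sets in $\overline{G}$. Thus, if $PI$ is a prime independent set of $G$, then in $\overline{G}$, $PI$ is a clique which intersects every maximum independent set of $\overline{G}$, and every vertex of $PI$ lies in some maximum independent set of $\overline{G}$. That is precisely the data of a prime clique of $\overline{G}$. Symmetrically, a prime clique $PK$ of $G$ becomes a prime independent set of $\overline{G}$.

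To finish the inductive step I would use that $\overline{G}[V - PI] = \overline{G[V - PI]}$. Since $G[V - PI]$ is quasiperfect by hypothesis and has fewer vertices than $G$, the induction hypothesis gives that $\overline{G[V - PI]}$ is quasiperfect, so $PI$ witnesses the second bullet of Definition \ref{qp} for $\overline{G}$. Running the same argument with $PK$ witnesses the first bullet.

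Frankly, I do not foresee a real obstacle here; the statement is almost a tautology once one notices that Definition \ref{qp} is completely self-dual under complementation. The only thing worth being careful about is the side condition that each vertex of a prime independent set (resp.\ clique) actually sits in some maximum clique (resp.\ independent set) — but this condition is itself preserved by the same duality, so no extra work is required.
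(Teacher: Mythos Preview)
Your proposal is correct and follows essentially the same route as the paper: induction on $|V|$, with the observation that complementation swaps prime independent sets and prime cliques, together with the identity $\overline{G}[V\setminus S]=\overline{G[V\setminus S]}$ to invoke the inductive hypothesis. If anything, your write-up is more explicit than the paper's, since you spell out the side condition that each vertex of $PI$ (resp.\ $PK$) lies in a maximum clique (resp.\ independent set) and note that it is preserved under the same duality.
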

\begin{proof}
We'll prove by induction.

It's trivial for $G=K_0$. Assume the statement is true for all induced quasiperfect subgraph of $G$. We need to prove $\overline{G}$ is quasiperfect.

The complement of a graph turn independent set into clique and vice versa. So we easily see that prime independent set of $G$ is prime clique of $\overline{G}$ and vice versa. So by Definition \ref{qp}, you need to prove that $\overline{G[V-PI]},\overline{G[V-PC]}$ is quasiperfect, which is true by induction hypothesis and Definition \ref{qp}.
\end{proof}
So we see that the quasiperfect graph have the clique number equal the chromatic number, the complement of a quasiperfect graph is also quasiperfect graph. They are also basic properties of perfect graphs as we want.
\section{Some families of graphs that are quasiperfect}
We will show the quasiperfect graphs is the non-trivial generation of the perfect graphs
\subsection{The perfect graphs}
We also hope that the perfect graphs is quasiperfect. Because all induced subgraphs of the perfect graph is also perfect, we just need to proof there exists a prime clique and prime independent set of perfect graph and the result follow by induction. By taking complement, we just need to find a prime clique. We inspired by the proof of \emph{Weakly perfect graph theorem} by Lovász \cite{wpgt1,wpgt2}.

Let $G$ be a perfect graph. For each vertex $v$ of $G$, we replace it by a clique with $t_v$ vertices to create a new graph $G'$, $t_v$ is the number of maximum independent set contains $v$, if $t_v=0$, we just delete $v$. Because all induced subgraphs of a perfect graph is also perfect and replace a vertex by a clique create a new perfect graph so $G'$ is perfect, so it have clique number equal chromatic number. By the construction of $G'$, we can correspond each vertex of $G'$ to a maximum independent set of $G$, so we have a $I$-coloring of $G'$ with $I$ is the number of maximum independent set of $G$. We see that the set of all vertices colored by the same color is a maximum independent set of $G'$, so this is an optimal coloring, so $\omega(G)=\chi(G)=I$. We take a maximum clique $K'$ of size $I$ in $G'$, then $K'$ intersects all maximum independent set of $G'$. Let $PK$ be the set of all vertices $v$ of $G$ such that there is a vertex $v'\in K'$ created by $v$, then we easily have $PK$ is a prime clique of $G$.

So the quasiperfect graphs is a generation of the perfect graphs. We can use ideas from perfect graph to work with quasiperfect graph.
\subsection{Graphs created from odd cycles}
Now we create an imperfect graph which is quasiperfect. The easiest way is creating from an odd cycles $C_n$ with $n\geq 5$. The chromatic number of $C_n$ is $3$, so we need to add cliques of size $3$. Let $v_1=v_{n+1},v_2,...,v_n$ be vertices of $C_n$, with $v_i,v_{i+1}$ are joined. We add new vertices $w_{k_1},w_{k_2},...,w_{k_o},1\leq k_p\leq n$ and join $w_{k_p}$ with $v_{k_p},v_{k_p+1}$ to create a new graph $G=(V,E)$.

We can choose prime clique $PK=\{w_{k_1},v_{k_1},v_{k_1+1}\}$ then it's easy to see that $G[V-PK]$ is a block graph (a graph which every biconnected component is a clique), so is perfect and also quasiperfect. 

Now we choose prime independent set $PI$. There are two cases:

$-$ If $o=n=2m+1$, we choose $PI=\{w_n,v_2,v_4,...,v_{2m}\}$. It's easy to see that $PI$ is a prime independent set, and $G[V-PI]$ is a forest, so is perfect and also quasiperfect.

$-$ If $o<n$, we taking the set $P=\{v_{k_1},v_{k_2},...,v_{k_o}\}$, if there exists $l$ such that $v_{l-1},v_l,v_{l+1}\in P$ and $v_l,v_{l+1}$ are joined but $v_l,v_{l-1}$ are not, delete $v_l$ from the set, in the end we have a prime independent set $PI$ (because $o<n$). It's easy to see that $G[V-PI]$ is a forest, so is perfect and also quasiperfect. 

So $G$ is quasiperfect and the quasiperfect graphs is a non-trivial generation of the perfect graphs.
\section{Further remarks}
The trick of replacing a vertex by a clique doesn't work for quasiperfect case. For example, we add a vertex to $C_5$ and join it to 2 vertices that are joined, then replace all vertices of $C_5$ by large enough cliques of the same size, it is easy to see that the result graph doesn't have clique number equal chromatic number. And we can proof that the trick only work for perfect graph, so we can't hope for any better generation.

It's a natural question to ask which induced subgraphs of a quasiperfect graph is quasiperfect? In the coloring in Theorem \ref{basic}, we see that the prime independent set is a set of all vertices have the same color, so we can hope that, give a minimum coloring and a color $c$, remove all the vertices have color $C$ from the quasiperfect creats a new quasiperfect graph.

There are some families of graphs base on algebraic objects have clique number equal chromatic number such as the enhanced power graphs of groups, the annihilating-ideal graphs of commutative rings \cite{epg1,epg2,aig}. We hope that those families of graphs are also quaiperfect.

I've showed above that there are quasiperfect graphs that have odd cycle $C_n$ with $n\geq 5$ as their induced subgraph. It's a natural question to ask for arbitrary graph $G=(V,E)$, is there a quasiperfect graphs $G'=(V',E')$ that has $G$ as its induced graph, and for fixed value of $|V|$ how smallest $|V'|$ can be?

It's likely that the properties "quasiperfect" is completely global, so by research on the quasiperfect graphs (or even classification), we can learn more about global property of the perfect graphs and other families of graphs that are quasiperfect.
\section{acknowledgement}
I would like to thank Professor Peter J. Cameron for endorsing me to submit this paper.

\end{document}